\newtheorem*{corollary*}{Corollary}
\newtheorem*{conjecture*}{Conjecture}
\newtheorem*{question*}{Question}
\newtheorem{theorem}{Theorem}[section]
\newtheorem*{theorem*}{Theorem}
\newtheorem{corollary}[theorem]{Corollary}
\newtheorem{lemma}[theorem]{Lemma}
\newtheorem{proposition}[theorem]{Proposition}
\newtheorem*{claim*}{Claim}
\theoremstyle{definition}
\newtheorem{example}[theorem]{Example}
\theoremstyle{remark}
\numberwithin{equation}{theorem}
\renewcommand*\env@matrix[1][\
arraystretch]{%
  \edef\arraystretch{#1}%
  \hskip -\arraycolsep
  \let\@ifnextchar\new@ifnextchar
  \array{*\c@MaxMatrixCols c}}
\begin{document}

\title{On reflexive simple modules in Artin algebras}
\date{\today}

\subjclass[2010]{Primary 16G10, 16E10}

\keywords{reflexive modules, selfinjective algebras, simple modules}

\author{Ren\'{e} Marczinzik}
\address{Institute of algebra and number theory, University of Stuttgart, Pfaffenwaldring 57, 70569 Stuttgart, Germany}
\email{marczire@mathematik.uni-stuttgart.de}

\begin{abstract}
Let $A$ be an Artin algebra. It is well known that $A$ is selfinjective if and only if every finitely generated $A$-module is reflexive. In this article we pose and motivate the question whether an algebra $A$ is selfinjective if and only if every simple module is reflexive. We give a positive answer to this question for large classes of algebras which include for example all Gorenstein algebras and all QF-3 algebras.
\end{abstract}

\maketitle
\section{Introduction}
Let an algebra $A$ always be a non-semisimple Artin algebra and modules will be finitely generated right modules if nothing is stated otherwise. In this article $(-)^{*}$ will denote the functor $Hom_A(-,A) : mod-A \rightarrow mod-A^{op}$. 
Recall that a module $M$ is called \emph{reflexive} in case the canonical evaluation map $f_M : M \rightarrow M^{**}$ is an isomorphism.
A well known characterisation of $A$ being selfinjective is that each $A$-module is reflexive, see for example \cite{L} theorem 15.11. and corollary 19.44. It is thus a natural question whether being selfinjective can be detected by the property of a smaller and possibly finite class of modules being reflexive. A natural candidate for a small and finite class of modules is the class of simple modules. This leads us to the following question:
\begin{question*}
Let $A$ be an Artin algebra such that every simple $A$-module is reflexive. Is $A$ selfinjective?

\end{question*}
In this article we give a positive answer to this question for a large class of algebras.
Recall that the Gorenstein symmetry conjecture states that the injective dimension of the left regular module coincides with the injective dimension of the right regular module. This conjecture is open and it holds for example for all algebras of finite finitistic dimension.
Our main result can be stated as follows:
\begin{theorem*}
Let $A$ be an Artin algebra that satisfies the Gorenstein symmetric conjecture and such that the injective envelope $I(A)$ of the regular module $A$ has finite projective dimension and assume that each simple $A$-module is reflexive. Then $A$ is selfinjective.
\end{theorem*}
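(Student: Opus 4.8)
The plan is to reduce the theorem to the following assertion, which carries its real content: \emph{if every simple right $A$-module is torsionless and $A_A$ has finite injective dimension, then $A$ is selfinjective}. Granting this, the theorem follows at once. If $S$ is simple with $f_S$ an isomorphism, then $S^{**}\cong S\neq 0$ forces $S^{*}\neq 0$, so there is a nonzero (hence injective, $S$ being simple) map $S\to A_A$; thus $S$ is a simple submodule of $A_A$, hence a summand of $\soc(A_A)$, and therefore $I(S)$ is a summand of $I(\soc A_A)=I(A)$. By hypothesis $I(A)$ has finite projective dimension, so each $I(S)$ does; as $D({}_AA)$ is a direct sum of the modules $I(S)$ (with every $I(S)$ occurring), $D({}_AA)$ has finite projective dimension, i.e.\ ${}_AA$ has finite injective dimension. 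The Gorenstein symmetry hypothesis now gives that $A_A$ too has finite injective dimension, and since each simple, being reflexive, is torsionless, the reduced assertion applies and yields that $A$ is selfinjective.

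It remains to prove the reduced assertion. Suppose, for contradiction, that $d$, the injective dimension of $A_A$, satisfies $1\le d<\infty$. Then $\Ext^{d}_A(-,A)$ is nonzero while $\Ext^{d+1}_A(-,A)$ vanishes, so dévissage along composition series produces a \emph{simple} module $S$ with $\Ext^{d}_A(S,A)\neq 0$. As observed, $S$ embeds into $A_A$, hence into some indecomposable projective summand $P$ of $A_A$; put $C=P/S$, so that $0\to S\to P\to C\to 0$ is exact. Applying $\Hom_A(-,A)$ and using that $\Ext^{d}_A(P,A)=\Ext^{d+1}_A(P,A)=0$ (because $d\ge 1$ and $P$ is projective), the long exact sequence gives $\Ext^{d}_A(S,A)\cong \Ext^{d+1}_A(C,A)$. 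But $\Ext^{d+1}_A(C,A)=0$ since $A_A$ has injective dimension $d$. This contradicts $\Ext^{d}_A(S,A)\neq 0$, so $d=0$; that is, $A_A$ is injective and $A$ is selfinjective.

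The step that needs the idea is this reduction — more precisely, the observation that torsionlessness of the simples is exactly what allows one to push a nonvanishing $\Ext^{d}_A(S,A)$ up to degree $d+1$, where it must be zero, by embedding $S$ into a projective; thus a \emph{finite} positive value of the injective dimension of $A_A$ cannot occur. The other two hypotheses play no role in this core argument: their sole function is to force the injective dimension of $A_A$ to be finite. One should also record why the advertised classes are covered. A Gorenstein algebra satisfies Gorenstein symmetry by definition and has $\operatorname{pd} I(A)\le \operatorname{pd} D({}_AA)=\operatorname{id}({}_AA)<\infty$. A QF-3 algebra has $I(A)$ projective, so $\operatorname{pd} I(A)=0$; in fact the first paragraph then already shows each $I(S)$, and hence $D({}_AA)$, is projective, so ${}_AA$ is injective and $A$ is selfinjective — for QF-3 algebras neither the reduced assertion nor Gorenstein symmetry is needed.
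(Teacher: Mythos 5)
Your proof is correct, and it is worth comparing with the paper's. The outer reduction is the same in both: from reflexivity one gets $S^{*}\neq 0$ for every simple $S$, hence every simple embeds into $A_A$, hence every indecomposable injective is a summand of $I(A)$, so $\operatorname{pd} D({}_AA)=\operatorname{id}({}_AA)<\infty$, and Gorenstein symmetry transfers this to $A_A$. The engine of the contradiction, however, is genuinely different. The paper works inside Gorenstein homological algebra: it quotes from \cite{Che} that the Gorenstein dimension $g$ equals the global Gorenstein projective dimension, proves separately that $g$ is attained at a simple module, uses $M^{*}\cong\Omega^{2}(Tr(M))$ to realize a reflexive simple as a $2$-syzygy $0\to S\to P\to C\to 0$, and then applies the inequality $\mathrm{Gpd}(S)\le\max(\mathrm{Gpd}(P),\mathrm{Gpd}(C)-1)\le g-1$. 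You replace all of this by the bare long exact sequence: choosing a simple $S$ with $\Ext^{d}_A(S,A)\neq 0$ for $d=\operatorname{id}(A_A)\ge 1$ (your d\'evissage along composition series is the elementary substitute for the paper's lemma that the Gorenstein dimension is attained at a simple), the segment $\Ext^{d}_A(P,A)\to\Ext^{d}_A(S,A)\to\Ext^{d+1}_A(C,A)$ with both outer terms zero kills $\Ext^{d}_A(S,A)$. This buys two things: the argument is self-contained (no transpose, no input from \cite{Che}), and it visibly uses only that the simples are \emph{torsionless} rather than reflexive (torsionlessness of a nonzero module already forces $S^{*}\neq 0$), so you have in fact proved a formally stronger statement. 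Your closing observation on QF-3 algebras is also correct and more direct than the paper's separate treatment of that case, which goes through the Auslander--Reiten theorem on extension-closure of $\Omega^{1}(mod\text{-}A)$.
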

This provides a positive answer to our question for a large class of algebras including all Gorenstein algebras and thus also for all algebras of finite global dimension.
We give also a positive answer to the question for other classes of algebras such as QF-2 algebras, QF-3 algebras, local algebras and algebras $A$ with $A \cong A^{op}$. The author is thankful to Steffen K\"onig for helpful comments.
\section{Preliminaries}
We assume that all algebras in this article are basic Artin algebras and modules are finitely generated right modules when not stated otherwise. Note that our assumption that the algebras are basic is no restriction in generality since the notions we work with are invariant under Morita equivalences.
We assume that the reader is familiar with the basics of representation theory of Artin algebras as explained for example in \cite{ARS}. For a module $M$, $Tr(M)$ will denote the transpose of $M$, $D(M)$ the natural duality of an Artin algebra applied to $M$ and $\tau$ and $\tau^{-1}$ denote the Auslander-Reiten translate and its inverse. $J$ will denote the Jacobson radical of an algebra $A$.

We call a module $M$ an \emph{$i$-syzygy module} in case $M \cong \Omega^i(N)$ for some module $N$ and $i \geq 0$. We define $\Omega^i(mod-A)$ for $i \geq 0$ to be the full subcategory of modules that are $i$-syzygy modules or projective modules.
An algebra $A$ is called a \emph{Gorenstein algebra} in case the injective dimension of the left regular module coincides with the injective dimension of the right regular module and both dimensions are finite. The injective dimension of the right regular module is called the \emph{Gorenstein dimension}. For example every algebra of finite global dimension is a Gorenstein algebra and the Gorenstein dimension coincides with the global dimension in this case. The Gorenstein projective dimension of a module $M$ over a Gorenstein algebra $A$ is defined by $Gpd(M):= sup \{ i \geq 0 | Ext_A^i(M,A) \neq 0 \}$. The \emph{global Gorenstein dimension} of an algebra $A$ is defined as the supremum of Gorenstein projective dimension of all modules and it is shown to be equal to the Gorenstein dimension in \cite{Che} corollary 3.2.6. For the definition of the Gorenstein projective dimension for general algebras and more information on Gorenstein homological algebra we refer to \cite{Che}. For algebras of finite global dimension, the Gorenstein projective dimension of a module coincides with the projective dimension of the module.
The \emph{finitistic dimension} of an algebra is defined as the supremum of projective dimensions of modules with finite projective dimension. It is an open problem whether the finitistic dimension is always finite for Artin algebras. The Gorenstein symmetry conjecture states that for an Artin algebra $A$ one has that the injective dimension of the left regular module coincides with the injective dimension of the right regular module. The Gorenstein symmetry conjecture is open as well and it is true for algebras having finite finitistic dimension, which includes for example all Gorenstein algebras or all algebras of finite representation type. We refer to the conjectures section in the book \cite{ARS} for more on this with references.
QF-3 algebras are defined as algebras such that the injective envelope of the regular module is projective. An algebra is QF-3 if and only if its opposite algebra is QF-3. A QF-2 algebra is defined by the property that the socle of each indecomposable projective module is simple. The class of QF-3 algebras is very large and includes for example all Nakayama algebras or algebras that have the double centraliser property with respect to a minimal faithful projective-injective module such as Schur algebras $S(n,r)$ for $n \geq r$. For more on QF-2 and QF-3 algebras we refer for example to \cite{AnFul}.
\section{On simple reflexive modules in Artin algebras}
Recall that a module $M$ over an algebra $A$ is called \emph{reflexive} in case the canonical evaluation map $f_M :M \rightarrow M^{**}$ is an isomorphism, where $f_M(m)(g)=g(m)$ for $m \in M$, $g \in M^{*}$ and $(-)^{*}$ denotes the functor $Hom_A(-,A)$. In case the canonical evaluation map $f_M$ is injective for a module $M$, $M$ is called \emph{torsionless}. It is well known that a module $M$ is torsionless if and only if $M$ is a submodule of a projective module.
We will need the following characterisation of torsionless and reflexive modules:
\begin{proposition}
\begin{enumerate}
\item A module $M$ is torsionless if and only if $Ext_A^1(D(A),\tau(M))=0$.
\item A module $M$ is reflexive if and only if $Ext_A^i(D(A),\tau(M))=0$ for $i=1,2$.
\end{enumerate}
\end{proposition}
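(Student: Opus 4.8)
The plan is to deduce both statements from the classical Auslander--Bridger exact sequence describing the kernel and cokernel of the evaluation map $f_M$, and then to transport the resulting $\Ext$-vanishing conditions across the duality $D$. To set up, fix a minimal projective presentation $P_1 \to P_0 \to M \to 0$ of $M$; applying $(-)^{*}=\Hom_A(-,A)$ produces the exact sequence $0 \to M^{*} \to P_0^{*} \to P_1^{*} \to Tr(M) \to 0$ of finitely generated right $A^{op}$-modules, and by definition $\tau(M)=D\,Tr(M)$.

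The first step is to invoke the Auslander--Bridger exact sequence (see for example \cite{ARS})
\[
0 \longrightarrow \Ext_{A^{op}}^1(Tr(M),A) \longrightarrow M \xrightarrow{f_M} M^{**} \longrightarrow \Ext_{A^{op}}^2(Tr(M),A) \longrightarrow 0,
\]
where $A$ denotes the regular $A^{op}$-module. Reading off the kernel and cokernel of $f_M$, the module $M$ is torsionless if and only if $\Ext_{A^{op}}^1(Tr(M),A)=0$, and $M$ is reflexive if and only if in addition $\Ext_{A^{op}}^2(Tr(M),A)=0$.

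The second step is to identify these groups with the ones in the statement. Since $D$ is a duality between the category of finitely generated $A^{op}$-modules and the category of finitely generated $A$-modules, it carries a projective resolution of $Tr(M)$ into an injective coresolution of $D\,Tr(M)=\tau(M)$ and hence induces natural isomorphisms $\Ext_{A^{op}}^i(X,Y)\cong \Ext_A^i(D(Y),D(X))$ for all $i\geq 0$. Taking $X=Tr(M)$ and $Y=A$ the regular $A^{op}$-module gives $\Ext_{A^{op}}^i(Tr(M),A)\cong \Ext_A^i(D(A),\tau(M))$, where on the right $D(A)$ is $D$ applied to the left regular module; substituting this into the equivalences of the previous step proves (1) and (2).

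Once the Auslander--Bridger sequence is in hand the argument is purely formal, so I do not expect a real obstacle. The one point requiring care is bookkeeping with the sides: the copy of $A$ appearing inside $\Ext_{A^{op}}(Tr(M),A)$ must be read as the regular $A^{op}$-module, while $\tau(M)$ and $D(A)$ are right $A$-modules, so that the displayed isomorphism indeed lands in the group occurring in the statement.
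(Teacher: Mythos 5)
Your argument is correct. The paper itself gives no proof here—it simply cites \cite{ARS}, Chapter IV, Corollary 3.3—and what you have written is precisely the standard derivation of that corollary: the Auslander--Bridger exact sequence $0 \to \Ext^1_{A^{op}}(Tr(M),A) \to M \to M^{**} \to \Ext^2_{A^{op}}(Tr(M),A) \to 0$ (ARS IV.3.2) identifies the kernel and cokernel of $f_M$, and the exact duality $D$ converts $\Ext^i_{A^{op}}(Tr(M),A)$ into $\Ext^i_A(D(A),\tau(M))$. Your attention to the side on which each module lives, and your use of a minimal presentation so that $Tr(M)$ has no projective summands and $D\,Tr(M)=\tau(M)$ on the nose, are exactly the points that need care; there is no gap.
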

\begin{proof}
See for example \cite{ARS}, corollary 3.3. in chapter IV.
\end{proof}
The previous proposition also shows that a module is torsionless (reflexive) if and only if every of its direct summands is torsionless (reflexive).
\begin{proposition} \label{2syzy}
Let $M$ be an indecomposable non-projective module. Then $M^{*}$ is isomorphic to $\Omega^{2}(Tr(M))$.
\end{proposition}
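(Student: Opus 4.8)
The plan is to dualise a minimal projective presentation of $M$. Choose a minimal projective presentation
\[
P_1 \xrightarrow{d} P_0 \xrightarrow{p} M \longrightarrow 0 ,
\]
so that by definition $Tr(M)$ is the cokernel of $d^{*}=\Hom_A(d,A)$. Since $(-)^{*}=\Hom_A(-,A)$ is left exact, applying it to the presentation yields an exact sequence $0 \to M^{*} \xrightarrow{p^{*}} P_0^{*} \xrightarrow{d^{*}} P_1^{*}$, and splicing this together with the definition of $Tr(M)$ produces the four-term exact sequence
\[
0 \longrightarrow M^{*} \longrightarrow P_0^{*} \xrightarrow{d^{*}} P_1^{*} \longrightarrow Tr(M) \longrightarrow 0 ,
\]
in which $P_0^{*}$ and $P_1^{*}$ are finitely generated projective $A^{op}$-modules. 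Thus $M^{*}$ is already the ``second syzygy up to projective summands'' of $Tr(M)$, and the remaining task is to show that the displayed projective presentation of $Tr(M)$ is a \emph{minimal} one, so that $M^{*}$ is literally $\Omega^{2}(Tr(M))$.

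To establish minimality I would verify the two relevant radical conditions. First, $d$ is a radical map because the presentation of $M$ is minimal, and $(-)^{*}$ restricts to a duality between the categories of finitely generated projective right $A$-modules and finitely generated projective right $A^{op}$-modules; since a duality of additive categories preserves the radical, $d^{*}$ is again a radical map, so $P_1^{*} \to Tr(M)$ is a projective cover and $\Omega^{1}(Tr(M)) = \operatorname{im}(d^{*})$. Second, I claim $\operatorname{im}(p^{*}) \subseteq \rad(P_0^{*})$. Indeed $\rad(P_0^{*}) = \Hom_A(P_0, J)$, so it is enough to observe that every $f \in M^{*}$ has image contained in $J$: otherwise, composing $f$ with a projection of $A$ onto a suitable indecomposable projective summand $P$ would give, by Nakayama's lemma, a surjection $M \twoheadrightarrow P$, which splits and exhibits $P$ as a direct summand of $M$; since $M$ is indecomposable this forces $M \cong P$ to be projective, contrary to hypothesis. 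Hence $P_0^{*} \to \operatorname{im}(d^{*})$ is a projective cover as well, and therefore $\Omega^{2}(Tr(M)) = \Omega^{1}(\operatorname{im}(d^{*})) = \ker(d^{*}) = M^{*}$, which is the assertion.

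Everything except the minimality check is formal manipulation of the defining exact sequences and left exactness of $\Hom_A(-,A)$, so I expect the main obstacle to be precisely that verification, and within it the containment $\operatorname{im}(p^{*}) \subseteq \rad(P_0^{*})$ — this is the only point at which the indecomposability and non-projectivity of $M$ genuinely enter. (If one were content with an isomorphism only in the stable category, or modulo projective summands, the four-term exact sequence alone would suffice and no minimality argument would be needed.)
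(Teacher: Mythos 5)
Your proof is correct and complete. Note that the paper does not actually prove this proposition; it simply cites \cite{GH}, Proposition 4.8.2, and the argument you give is precisely the standard one behind that citation: dualise a minimal projective presentation $P_1 \xrightarrow{d} P_0 \to M \to 0$, use left exactness of $\Hom_A(-,A)$ to get the four-term exact sequence $0 \to M^{*} \to P_0^{*} \to P_1^{*} \to Tr(M) \to 0$, and then check that the resulting presentation of $Tr(M)$ is minimal. You correctly isolate the two minimality conditions as the only nontrivial content. For the first, besides the abstract remark that a duality of additive categories preserves the radical, there is a one-line direct check: if $\operatorname{im}(d) \subseteq P_0 J$, then for $f \in P_0^{*}$ one has $(f \circ d)(P_1) \subseteq f(P_0 J) = f(P_0)J \subseteq J$, so $d^{*}$ lands in $\Hom_A(P_1,J) = \rad(P_1^{*})$. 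For the second, your Nakayama argument is exactly where indecomposability and non-projectivity of $M$ enter: a map $M \to A$ with image not in $J$ projects to a surjection onto an indecomposable projective summand, which splits and would force $M$ to be projective. Your closing parenthetical is also the right caveat -- without the minimality check one only gets $M^{*} \cong \Omega^{2}(Tr(M))$ up to projective summands, whereas the paper's later use of this proposition (in its Lemma on $M \cong M^{**}$) relies on the honest minimal second syzygy.
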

\begin{proof}
This is a special case of \cite{GH} proposition 4.8.2. 
\end{proof}
The second part of the following lemma shows that a module is reflexive if and only if $M \cong M^{**}$. While the proof is very elementary, we did not find this result in the literature and we do not know whether this result holds for arbitrary rings and finitely generated modules.
\begin{lemma}
Let $A$ be an Artin algebra and $M$ a finitely generated $A$-module.
\begin{enumerate}
\item An indecomposable non-projective module with $M \cong M^{**}$ is a 2-syzygy module.
\item $M$ is reflexive if and only if $M \cong M^{**}$.
\end{enumerate}
\end{lemma}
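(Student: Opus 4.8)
The plan is to treat the two parts separately: part (1) rests on Proposition~\ref{2syzy}, while part (2) uses only the standard split-monomorphism identity for the evaluation map together with finiteness of composition length, so it does not need part (1).

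For part (1), let $M$ be indecomposable and non-projective with $M\cong M^{**}$; I want to exhibit $M$ as a $2$-syzygy module. Since $M\neq 0$, also $M^{*}\neq 0$ (otherwise $M\cong M^{**}=0$). The first real step is to show that $M^{*}$ has no nonzero projective direct summand: write $M^{*}=P'\oplus X$ with $P'$ projective and $X$ having no projective summand, and apply $(-)^{*}$ to get $M\cong M^{**}\cong (P')^{*}\oplus X^{*}$; since $M$ is indecomposable one summand is zero, and $X^{*}=0$ would force $M\cong (P')^{*}$ to be projective, so $(P')^{*}=0$ and hence $P'=0$, the dual of a nonzero projective being nonzero. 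Now $M^{*}=X$ has no projective summand, so in a decomposition $M^{*}=\bigoplus_{i}Y_{i}$ into indecomposables each $Y_{i}$ is non-projective, whence Proposition~\ref{2syzy} gives $Y_{i}^{*}\cong\Omega^{2}(Tr(Y_{i}))$; because $\Omega^{2}$ taken along minimal projective resolutions commutes with finite direct sums, $M^{**}=\bigoplus_{i}Y_{i}^{*}\cong\Omega^{2}\bigl(\bigoplus_{i}Tr(Y_{i})\bigr)$. Hence $M^{**}$, and therefore $M$, is a $2$-syzygy module.

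For part (2), the implication "$M$ reflexive $\Rightarrow M\cong M^{**}$" is immediate from the definition of $f_{M}$. For the converse I would use the identity $(f_{M})^{*}\circ f_{M^{*}}=\mathrm{id}_{M^{*}}$, which holds for every finitely generated module over any ring and is a one-line computation from the definition of the evaluation map; it makes $f_{M^{*}}\colon M^{*}\to M^{***}$ a split monomorphism, so $M^{***}\cong M^{*}\oplus C$ for some module $C$. Assuming $M\cong M^{**}$ and applying $(-)^{*}$ gives $M^{*}\cong M^{***}\cong M^{*}\oplus C$; since finitely generated modules over an Artin algebra have finite length, comparing lengths forces $C=0$, so $f_{M^{*}}$ is an isomorphism and $M^{*}$ is reflexive. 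Next one notes that the dual of a reflexive module is reflexive --- if $f_{N}$ is an isomorphism then so is $(f_{N})^{*}$, and the same identity for $N$ exhibits $f_{N^{*}}$ as its inverse --- and applies this with $N=M^{*}$ to conclude that $M^{**}$ is reflexive; since $M\cong M^{**}$, the module $M$ is reflexive too.

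The whole argument is elementary and I do not foresee a serious obstacle; the delicate points are the split-monomorphism identity and the finite-length cancellation $M^{*}\oplus C\cong M^{*}\Rightarrow C=0$ in part (2), and in part (1) the additivity of minimal syzygies used to pass from "$\Omega^{2}$ of each indecomposable summand of $M^{*}$" to "$M^{**}$ is $\Omega^{2}$ of a single module". It is also worth saying explicitly why indecomposability is assumed in (1): for a decomposable $M$ the dual $M^{*}$ may retain a projective summand (already when $M$ itself has one), which is exactly the situation the larger subcategory $\Omega^{2}(\mod\text{-}A)$ is designed to accommodate, whereas the conclusion of (1) is phrased in terms of honest $2$-syzygy modules.
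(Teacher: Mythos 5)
Your proof is correct. Part (1) follows essentially the paper's own route: the paper likewise notes that $M^{*}\cong\Omega^{2}(Tr(M))$ can have no nonzero projective summand (else $M\cong M^{**}$ would), and then applies Proposition~\ref{2syzy} again to $M^{*}$; you merely make explicit the decomposition of $M^{*}$ into indecomposables and the additivity of $Tr$ and of minimal syzygies that the paper leaves implicit. Part (2), however, is genuinely different. The paper reduces to $M$ indecomposable non-projective, invokes part (1) to see that $M$ is a $2$-syzygy module, hence a submodule of a projective, hence torsionless, so $f_{M}$ is injective; equality of the lengths of $M$ and $M^{**}$ then forces $f_{M}$ to be an isomorphism. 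You instead use the triple-dual identity $(f_{M})^{*}\circ f_{M^{*}}=\mathrm{id}_{M^{*}}$ to exhibit $f_{M^{*}}$ as a split monomorphism, cancel the complement by a length count from $M^{*}\cong M^{***}$, conclude that $M^{*}$ is reflexive, and transfer reflexivity to $M^{**}\cong M$ via the formal fact that the dual of a reflexive module is reflexive. Your version buys independence from part (1), from Proposition~\ref{2syzy}, and from the reduction to indecomposable non-projective summands; its only non-formal input is the cancellation $N\oplus C\cong N\Rightarrow C=0$, which holds not only for finite-length modules but whenever $M^{*}$ is a Noetherian module (a surjective endomorphism of a Noetherian module is injective). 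That is directly relevant to the paper's remark that it does not know whether (2) holds for arbitrary rings and finitely generated modules: your argument settles it affirmatively whenever the base ring is Noetherian on the appropriate side. The paper's argument is shorter once (1) is in hand, but yours is the more portable one.
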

\begin{proof}
\begin{enumerate}
\item  We have $M^{*}=\Omega^{2}(Tr(M))$ by \ref{2syzy}, which can not have non-zero projective summands (or else $M \cong M^{**}$ would have non-zero projective summands). Thus $M^{**} = \Omega^2(Tr(\Omega^{2}(Tr(M))))$ is a 2-syzygy module.
\item In case $M$ is reflexive we have $M \cong M^{**}$ by definition.
Now assume that we have $M \cong M^{**}$. In order to prove the lemma, we can assume that $M$ is non-projective and indecomposable (since a module is reflexive if and only if every of its summands are reflexive). By the first part, $M$ is a 2-syzygy module and thus a submodule of a projective module. Thus $M$ is torsionless and $f_M$ is injective. But since $M$ and $M^{**}$ have the same length, $f_M$ must be an isomorphism.
\end{enumerate}
\end{proof}

\begin{lemma} \label{chenlemma}
Let $0 \rightarrow L \rightarrow M \rightarrow N \rightarrow 0$ be a short exact sequence.
Then the following holds:
\begin{enumerate}
\item $Gpd(N) \leq max(Gpd(M),1+Gpd(L))$.
\item $Gpd(L) \leq max(Gpd(M),Gpd(N)-1)$.
\item $Gpd(M) \leq max(Gpd(L),Gpd(N))$.

\end{enumerate}
\end{lemma}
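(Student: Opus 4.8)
The plan is to read all three inequalities off a single long exact sequence. First I would dispose of the degenerate cases: if one of $L,M,N$ is zero then two of the three modules are isomorphic and the three assertions are immediate, so I may assume $L$, $M$ and $N$ are all nonzero, in which case $Gpd(L),Gpd(M),Gpd(N)$ are nonnegative integers (recall that over a Gorenstein algebra every nonzero module has finite, hence well-defined, Gorenstein projective dimension). Then I apply $\Hom_A(-,A)$ to $0\to L\to M\to N\to 0$ and pass to the associated long exact sequence
\[
\cdots \to \Ext_A^{i-1}(L,A) \to \Ext_A^i(N,A) \to \Ext_A^i(M,A) \to \Ext_A^i(L,A) \to \Ext_A^{i+1}(N,A) \to \cdots,
\]
using throughout the description $Gpd(X)=\sup\{\,i\ge 0 \mid \Ext_A^i(X,A)\neq 0\,\}$ recalled in Section~2. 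Each inequality then amounts to choosing $i$ larger than the right-hand side and observing that two of the three $\Ext$-terms adjacent to $\Ext_A^i(\,\cdot\,,A)$ vanish, forcing the third to vanish as well.

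Carrying this out: for (3), take $i>\max(Gpd(L),Gpd(N))$; then $\Ext_A^i(N,A)=\Ext_A^i(L,A)=0$, so exactness at $\Ext_A^i(M,A)$ gives $\Ext_A^i(M,A)=0$, whence $Gpd(M)\le\max(Gpd(L),Gpd(N))$. For (1), take $i>\max(Gpd(M),1+Gpd(L))$; since $1+Gpd(L)\ge 1$ we have $i\ge 2$, so $\Ext_A^{i-1}(L,A)$ is defined and equals $0$, and also $\Ext_A^i(M,A)=0$, so exactness at $\Ext_A^i(N,A)$ gives $\Ext_A^i(N,A)=0$, whence $Gpd(N)\le\max(Gpd(M),1+Gpd(L))$. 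For (2), take $i>\max(Gpd(M),Gpd(N)-1)$; then $\Ext_A^i(M,A)=0$ and $\Ext_A^{i+1}(N,A)=0$, so exactness at $\Ext_A^i(L,A)$ gives $\Ext_A^i(L,A)=0$, whence $Gpd(L)\le\max(Gpd(M),Gpd(N)-1)$. Taking the supremum over the admissible $i$ in each case yields the three bounds.

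I do not expect a genuine obstacle here; this is a routine dimension-shifting argument, and the only points worth double-checking are the index bookkeeping in case (1) (that $\Ext_A^{i-1}(L,A)$ occurs in the sequence, which is guaranteed by $1+Gpd(L)\ge 1$) and the degenerate cases handled at the outset. For the analogous statement over an arbitrary Artin algebra, where $Gpd$ is defined through Gorenstein projective resolutions rather than by the $\Ext$-formula above, the same three inequalities belong to the standard calculus of homological dimensions relative to the resolving subcategory of Gorenstein projective modules and are proved in \cite{Che} via the horseshoe lemma together with dimension shifting.
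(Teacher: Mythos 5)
Your argument is correct, but it takes a different route from the paper, whose entire proof is a citation of \cite{Che}, corollary 3.2.4. That reference establishes the three inequalities for an arbitrary Artin algebra, with $Gpd$ defined via proper Gorenstein projective resolutions, using the standard calculus of resolving subcategories. You instead give a direct, self-contained proof by reading the vanishing of $\Ext_A^i(-,A)$ off the long exact sequence obtained from $\Hom_A(-,A)$; your index bookkeeping at each of the three positions is right, and the treatment of the degenerate cases is fine. The one thing to be aware of is that your proof only applies where the formula $Gpd(X)=\sup\{i\ge 0 \mid \Ext_A^i(X,A)\neq 0\}$ is the operative description of $Gpd$ --- in this paper that is the stated definition over a Gorenstein algebra (and your remark that nonzero modules then have well-defined finite $Gpd$ is what makes the supremum argument legitimate). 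Since the lemma is only ever invoked in the paper for Gorenstein algebras (in the proof of Lemma \ref{gorproprop} and in the Gorenstein case of the main theorem), your proof covers all of its uses; for the statement over a general Artin algebra you correctly defer to \cite{Che}, which is exactly what the paper does wholesale. What your approach buys is transparency and independence from the machinery of Gorenstein homological algebra in the case that matters here; what the citation buys is the full generality of the statement as written.
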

\begin{proof}
See \cite{Che}, corollary 3.2.4.
\end{proof}

\begin{lemma} \label{gorproprop}
Let $A$ be a Gorenstein algebra of Gorenstein dimension $g$. Then the Gorenstein dimension of $A$ is equal to the supremum of the Gorenstein projective dimensions of the simple $A$-modules.
\end{lemma}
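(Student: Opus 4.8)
The plan is to prove the two inequalities between $g$ and $s := \sup\{\,Gpd(S) \mid S \text{ a simple } A\text{-module}\,\}$ separately. The inequality $s \leq g$ is essentially immediate: by \cite{Che} corollary 3.2.6 the Gorenstein dimension $g$ of the Gorenstein algebra $A$ equals its global Gorenstein dimension, that is, the supremum of $Gpd(M)$ taken over \emph{all} finitely generated $A$-modules $M$; since simple modules are in particular finitely generated modules, $s \leq g$. (In particular every simple module has finite Gorenstein projective dimension, so $s$ is a finite number, and since $A$ is an Artin algebra it is even a maximum over the finitely many simple modules, though this last point is not needed.)

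For the reverse inequality $g \leq s$, using once more that $g = \sup_M Gpd(M)$, it suffices to show that $Gpd(M) \leq s$ for every finitely generated $A$-module $M$. I would do this by induction on the composition length of $M$. If $M$ has length one, it is simple and $Gpd(M) \leq s$ holds by the definition of $s$. For the inductive step, choose a maximal submodule $L \subsetneq M$ and set $N := M/L$, so that $N$ is simple and there is a short exact sequence $0 \to L \to M \to N \to 0$ with $L$ of strictly smaller composition length than $M$. By Lemma \ref{chenlemma}(3) we have $Gpd(M) \leq \max\bigl(Gpd(L), Gpd(N)\bigr)$; the induction hypothesis gives $Gpd(L) \leq s$, while $Gpd(N) \leq s$ because $N$ is simple. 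Hence $Gpd(M) \leq s$, completing the induction.

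Combining the two parts yields $s \leq g = \sup_M Gpd(M) \leq s$, so that $g = s$, which is exactly the assertion. I do not expect any genuine obstacle here; the two points that carry the argument are (i) the identification, from \cite{Che}, of the Gorenstein dimension with the global Gorenstein dimension, so that it becomes a supremum over all modules rather than a statement about $A$ alone, and (ii) the fact that part (3) of Lemma \ref{chenlemma} is precisely the ``middle-term'' bound needed to propagate an upper bound on Gorenstein projective dimension along a composition series.
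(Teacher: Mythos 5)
Your proposal is correct and is essentially the paper's argument: both rest on the identification $g = \sup_M Gpd(M)$ from \cite{Che} corollary 3.2.6 together with an induction on composition length using part (3) of Lemma \ref{chenlemma} to propagate the bound from simple modules to all modules. The only difference is presentational — you prove $g \leq s$ directly, whereas the paper assumes all simples have $Gpd \leq g-1$ and derives a contradiction.
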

\begin{proof}
By \cite{Che}, corollary 3.2.6. the Gorenstein dimension of $A$ is equal to the supremum of Gorenstein projective dimensions of the $A$-modules. Especially: The Gorenstein projective dimension of every simple module is bounded by $g$. Now every module $M$ has a composition series and it is thus build from extensions by simple modules. We show that there exists at least one simple module of Gorenstein projective dimension $g$. Assume to the contrary that each simple module has Gorenstein projective dimension at most $g-1$. We will show that this would imply that every module has Gorenstein projective dimension at most $g-1$ and thus give a contradiction. We use induction on the length of a module.
Length one modules are exactly the simple modules and their Gorenstein projective dimension is bounded by $g-1$. Now assume that all modules of length at most $l-1$ for some $l \geq 2$ have Gorenstein projective dimension at most $g-1$, then we show that also all modules of length $l$ would have Gorenstein projective dimension at most $g-1$.
Let $M$ be a module of length $l$ and $N$ be a maximal submodule of $M$.
Then there is an exact sequence from the inclusion of $N$ into $M$ with simple cokernel $S$:
$$0 \rightarrow N \rightarrow M \rightarrow S \rightarrow 0.$$
By \ref{chenlemma} and induction hypothesis, we have $Gpd(M) \leq max(Gpd(N),Gpd(S)) \leq g-1$. This finishes the induction and shows that there must be at least one simple modules with Gorenstein projective dimension equal to $g$.

\end{proof}
\begin{theorem}
Let $A$ be an Artin algebra that satisfies the Gorenstein symmetry conjecture. Assume furthermore that the injective envelope $I(A)$ of the regular module $A$ has finite projective dimension and assume that each simple $A$-module is reflexive. Then $A$ is selfinjective.

\end{theorem}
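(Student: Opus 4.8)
The plan is to use the reflexivity of every simple module first to force $A$ to be Gorenstein, and then to run a dimension-shifting argument that makes the Gorenstein dimension collapse to zero, which is exactly the statement that $A$ is selfinjective. The step where the hypothesis on $I(A)$ really enters is the passage from ``$I(A)$ has finite projective dimension'' to ``$D(A)$ has finite projective dimension'', and this is also the step I expect to be the main point: in general $I(A)$ need not have every indecomposable injective among its summands, so a bound on its projective dimension says nothing about $D(A)$, and reflexivity of the simples is precisely what repairs this.

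First I would show that $A$ is Gorenstein. If a simple module $S$ is reflexive, then $S^{*}=\Hom_A(S,A)\neq 0$, since $S\cong S^{**}=(S^{*})^{*}$ and $S\neq 0$; as $S$ is simple, any nonzero homomorphism $S\to A$ is injective, so $S$ embeds into $\soc(A_A)$. Since this holds for all simple modules, $\soc(A_A)$ contains a copy of every simple, and hence every indecomposable injective right $A$-module $I(S)$ embeds into the injective envelope $I(A)$ of $A_A$ and, being injective, is a direct summand of $I(A)$. As $D(A)$ is a direct sum of the modules $I(S)$, this gives $D(A)\in\add(I(A))$, so the projective dimension of $D(A)$ is at most that of $I(A)$, which is finite by assumption. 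Applying the duality $D$ to a projective resolution of $D(A)$ turns it into an injective coresolution of ${}_AA$, so the left regular module has finite injective dimension; by the Gorenstein symmetry conjecture so does the right regular module, and therefore $A$ is a Gorenstein algebra. Write $g$ for its Gorenstein dimension.

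It remains to prove $g=0$, so suppose $g\geq 1$. By Lemma \ref{gorproprop} there is a simple module $S$ with $Gpd(S)=g$, and such an $S$ is necessarily non-projective. Since $S$ is reflexive we have $S\cong S^{**}$, and being simple, hence indecomposable, and non-projective, $S$ is a $2$-syzygy module, say $S\cong\Omega^{2}(N)$. Dimension shifting then gives $\Ext^{i}_A(S,A)\cong\Ext^{i+2}_A(N,A)$ for all $i\geq 1$, and since every module has Gorenstein projective dimension at most $g$ we have $\Ext^{j}_A(N,A)=0$ for $j>g$; hence $\Ext^{i}_A(S,A)=0$ whenever $i>g-2$. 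Thus $Gpd(S)\leq\max(g-2,0)<g$, contradicting $Gpd(S)=g$. Therefore $g=0$, meaning $A_A$ is injective, i.e.\ $A$ is selfinjective. The remaining bookkeeping — that the dimension shift behaves as claimed and that the global Gorenstein dimension equals $g$ — is routine, relying on Lemmas \ref{chenlemma} and \ref{gorproprop}.
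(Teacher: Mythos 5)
Your proposal is correct and follows essentially the same route as the paper: reflexivity of the simples forces every indecomposable injective to be a summand of $I(A)$, whence $D(A)$ has finite projective dimension and the Gorenstein symmetry conjecture makes $A$ Gorenstein; then the $2$-syzygy property of a reflexive simple of maximal Gorenstein projective dimension contradicts Lemma \ref{gorproprop}. The only (cosmetic) difference is that you perform the final step by explicit dimension shifting on one simple module of Gorenstein projective dimension $g$, whereas the paper bounds $Gpd(S)\leq g-1$ for all simples at once via Lemma \ref{chenlemma}.
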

\begin{proof}
We first prove the theorem in case $A$ is a Gorenstein algebra. Let $g>0$ (which means that $A$ can not be selfinjective) denote the Gorenstein dimension of $A$.
Assume every simple module $S$ is reflexive. Then each such $S$ is 2-syzygy module by \ref{2syzy} and thus there exists an exact sequence:
$$0 \rightarrow S \rightarrow P \rightarrow M \rightarrow 0,$$
where $P$ is projective.
By \ref{chenlemma}, this shows that $Gpd(S) \leq max(Gpd(P),Gpd(M)-1) \leq g-1$, since the global Gorenstein projective dimension is $g$ and $Gpd(P)=0$. This proves that every simple $A$-module has Gorenstein projective dimension at most $g-1$, and so by \ref{gorproprop} $A$ has Gorenstein dimension at most $g-1$, which is a contradiction. Thus $A$ has to be selfinjective.
Now assume that $A$ is such that the injective envelope $I(A)$ of the regular module $A$ has finite projective dimension and every simple $A$-module is reflexive.
Since $S$ is reflexive, we necessarily have $S^{*} \cong Hom_A(S,A) \neq 0$ or else we would also have $S \cong S^{**}=(S^{*})^{*}=0^{*}=0$.
Now $Hom_A(S,A) \neq 0$ for every simple module $S$, shows that the socle of $A$ contains every simple module at least once as a direct summand. This shows that every indecomposable injective module is a direct summand of $I(A)$. By assumption, $I(A)$ has finite projective dimension and thus each indecomposable injective module has finite projective dimension. Since we assume that $A$ satisfies the Gorenstein symmetry conjecture, this shows that $A$ is Gorenstein and thus $A$ is selfinjective by the first part of the proof.
\end{proof}

We can give a proof of a weaker version of the previous theorem when we do not include the assumption that the Gorenstein symmetry conjecture holds for the algebra. The proof relies on the following result of Auslander and Reiten:
\begin{theorem} \label{AusReiresult}
Let $A$ be an Artin algebra. Then the subcategory $\Omega^1(mod-A)$ is closed under extensions if and only if the injective envelope $I(A)$ of the regular module $A$ has projective dimension at most one.
\end{theorem}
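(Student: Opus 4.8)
This theorem is due to Auslander and Reiten; here is the strategy one would use to prove it. The first step is to identify the subcategory $\Omega^1(\mathrm{mod}\text{-}A)$ with the subcategory $\mathcal T$ of torsionless modules, i.e.\ of submodules of projective modules: if $X\subseteq P$ with $P$ projective, then $0\to X\to P\to P/X\to 0$ exhibits $X$, after splitting off a projective summand, as a first syzygy, while conversely first syzygies and projectives are torsionless, and torsionlessness passes to and from direct summands. Since $\mathcal T$ always contains the projectives and is always closed under submodules and under finite direct sums, the content of the theorem is exactly that $\mathcal T$ is closed under extensions if and only if $\mathrm{pd}_A I(A)\le 1$. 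The membership criterion I would work with is the one obtained from the first proposition of this section via Auslander--Reiten duality: $X\in\mathcal T$ if and only if every homomorphism from $X$ to an injective module factors through a projective module, equivalently $\mathrm{Ext}^1_A(D(A),\tau X)=0$.

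For the direction ``$\mathrm{pd}_A I(A)\le 1\Rightarrow\mathcal T$ is closed under extensions'' I would first make a reduction that uses no hypothesis. Given an exact sequence $0\to L\xrightarrow{\alpha}M\to N\to 0$ with $L,N\in\mathcal T$ and a homomorphism $g\colon M\to E$ into an injective module, the composite $g\alpha$ factors through a projective module, and after enlarging that module one may take the factorisation to be $L\xrightarrow{\delta}P\xrightarrow{\gamma}E$ with $\delta$ a monomorphism. Forming the pushout of the original sequence along $\delta$ yields an exact sequence $0\to P\to M'\to N\to 0$ and a monomorphism $M\hookrightarrow M'$ along which, by the universal property of the pushout, $g$ extends to $M'$; hence it suffices to prove that an extension of a torsionless module by a projective module is again torsionless. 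This is where the hypothesis enters: since $N$ is torsionless, its injective envelope is a direct summand of a finite direct sum of copies of $I(A)$ and hence has projective dimension at most $1$, so one embeds $N$ into an injective module $E_N$ with $\mathrm{pd}_A E_N\le 1$, lifts the extension along a short projective resolution of $E_N$, and chases the resulting diagram to realise $M'$ (up to a projective summand) as a submodule of a projective module. The role of $\mathrm{pd}_A I(A)\le 1$ is precisely that the first syzygy occurring in this lift is projective, which forces the middle term into $\mathcal T$.

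For the converse I would argue contrapositively: assuming $\mathrm{pd}_A I(A)\ge 2$, I would exhibit a short exact sequence with torsionless outer terms but non-torsionless middle term. The obvious candidate is $0\to A\to I(A)\to I(A)/A\to 0$, whose middle term $I(A)$ is injective and not projective, hence not torsionless; the work is to show that $\mathrm{pd}_A I(A)\ge 2$ forces the cokernel $I(A)/A$ to be torsionless. Equivalently, and perhaps more transparently, one transports the problem to $\mathrm{mod}\text{-}A^{\mathrm{op}}$ via the transpose, under which $\mathcal T$ corresponds, modulo projectives, to $\{\,Y\mid \mathrm{Ext}^1_{A^{\mathrm{op}}}(Y,A)=0\,\}$, and reads off the obstruction from the minimal injective coresolution of $A$.

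The step I expect to be the genuine obstacle is controlling the interaction between a short exact sequence $0\to L\to M\to N\to 0$ and the functor $\mathrm{Tr}$ (equivalently $\tau$). Since $\mathrm{Tr}$ is only well behaved on the stable category and is not exact, one cannot simply apply $\mathrm{Ext}^1_A(D(A),-)$ to a transposed sequence, and a correct argument requires careful bookkeeping with syzygies and cosyzygies — in particular pinning down why the relevant obstruction lives in homological degree one, so that the hypothesis reads $\mathrm{pd}_A I(A)\le 1$ rather than $=0$ or $\le 2$. This degree count is the heart of the Auslander--Reiten argument.
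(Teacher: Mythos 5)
First, note that the paper does not actually prove this statement: it is imported from Auslander--Reiten and the ``proof'' is the citation [AR], Theorem 0.1. So there is no in-paper argument to match, and the relevant question is only whether your outline constitutes a proof on its own. It does not. You correctly assemble the standard ingredients --- the identification of $\Omega^1(mod-A)$ with the torsionless modules, the criterion that $X$ is torsionless if and only if every map from $X$ to an injective factors through a projective, and the pushout reduction to extensions $0\to P\to M\to N\to 0$ with $P$ projective and $N$ torsionless --- but both implications are left open at exactly the points where the real work sits, and you say so yourself.

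Concretely: in the forward direction, the step you describe as ``lifts the extension along a short projective resolution of $E_N$ and chases the resulting diagram'' is the entire content. Pulling back a resolution $0\to Q_1\to Q_0\to I(N)\to 0$ along $N\hookrightarrow I(N)$ gives $0\to Q_1\to Y\to N\to 0$ with $Y\subseteq Q_0$, and comparing this with $0\to P\to M\to N\to 0$ leaves an obstruction in $\operatorname{Ext}^1_A(Y,P)$ (or an equivalent computation with $Tr$) that must be shown to vanish; you never indicate how $\operatorname{pd} I(A)\le 1$ kills it, and it is not a routine chase. In the converse direction, your witness $0\to A\to I(A)\to I(A)/A\to 0$ only contradicts extension-closure if $I(A)/A$ is torsionless; you explicitly defer this (``the work is to show\dots''), no reason is offered why $\operatorname{pd} I(A)\ge 2$ should force it, and it is not clear that this particular sequence can be made to work rather than an argument via the transpose and the minimal injective copresentation of $A$. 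Since you also flag the degree count through $Tr$ as ``the heart of the Auslander--Reiten argument'' and leave it unresolved, the proposal is a plan of attack rather than a proof; for the purposes of this paper the correct and sufficient move is the author's, namely to cite [AR], Theorem 0.1.
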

\begin{proof}
See \cite{AR}, theorem 0.1.
\end{proof}
\begin{proposition}
Let $A$ be an Artin algebra such that the injective envelope $I(A)$ of the regular module $A$ has projective dimension at most one and such that each simple module is reflexive. Then $A$ is selfinjective.
\end{proposition}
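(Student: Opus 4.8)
The plan is to use Theorem~\ref{AusReiresult} to promote the reflexivity of every simple module into the statement that \emph{every} module lies in $\Omega^1(mod\text{-}A)$, and then to read off self-injectivity from the fact that every module is then torsionless. First I would check that every simple module lies in $\Omega^1(mod\text{-}A)$: a simple projective module is there by definition, while a simple non-projective module $S$ is reflexive by hypothesis, hence satisfies $S\cong S^{**}$, hence is a $2$-syzygy module by the lemma asserting that an indecomposable non-projective module $M$ with $M\cong M^{**}$ is a $2$-syzygy module, and therefore a fortiori a $1$-syzygy module. Thus every simple module belongs to $\Omega^1(mod\text{-}A)$.

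Next, since $I(A)$ has projective dimension at most one, Theorem~\ref{AusReiresult} gives that $\Omega^1(mod\text{-}A)$ is closed under extensions. I would then induct on composition length, exactly as in the proof of Lemma~\ref{gorproprop}: for a module $M$ of length $l\geq 2$ choose a maximal submodule $N$, producing a short exact sequence $0\to N\to M\to S\to 0$ with $S$ simple; by the inductive hypothesis $N\in\Omega^1(mod\text{-}A)$, by the previous step $S\in\Omega^1(mod\text{-}A)$, and closure under extensions yields $M\in\Omega^1(mod\text{-}A)$. Hence every finitely generated $A$-module lies in $\Omega^1(mod\text{-}A)$; in particular every such module is a submodule of a projective module, i.e.\ torsionless.

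It remains to deduce that $A$ is self-injective. Applying the previous paragraph to $D(A)$ shows that $D(A)$ embeds into a projective module, and since $D(A)$ is injective this embedding splits, so $D(A)$ is projective. Consequently every indecomposable injective $A$-module, being a direct summand of $D(A)$, is projective; as $A$ is basic the number of indecomposable injective modules equals the number of indecomposable projective modules, so (the injectives forming a subset of the projectives of equal cardinality up to isomorphism) these two classes of indecomposables coincide, and therefore $A_A=\bigoplus_i P_i$ is injective, i.e.\ $A$ is self-injective. I expect the step most in need of care to be this last one -- converting ``every module is torsionless'' into ``$A$ is self-injective'' -- which is standard but must be spelled out; everything preceding it is essentially immediate once Theorem~\ref{AusReiresult} is invoked, so in that sense the genuine content of the proposition is the Auslander--Reiten result being used as a black box.
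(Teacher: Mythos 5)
Your proposal is correct and follows essentially the same route as the paper: simple modules are $1$-syzygies via reflexivity, Theorem~\ref{AusReiresult} gives extension-closure of $\Omega^1(mod\text{-}A)$, hence $D(A)$ is torsionless and the resulting embedding into a projective splits. You merely make explicit two steps the paper compresses (the induction on composition length and the count matching indecomposable injectives with indecomposable projectives), both of which are fine.
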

\begin{proof}
Since every simple module is reflexive and thus a 2-syzygy module by \ref{2syzy}, it is especially a 1-syzygy module. Thus the subcategory $\Omega^1(mod-A)$ contains every simple module and by \ref{AusReiresult} this subcategory is also extension-closed. Since every finitely generated module can be obtained using extensions from simple modules this gives that $\Omega^1(mod-A)=mod-A$. Thus we have especially: $D(A) \in mod-A = \Omega^1(mod-A)$.
This gives that there exists a short exact sequence:
$$0 \rightarrow D(A) \rightarrow P \rightarrow M \rightarrow 0,$$
where the module $P$ is projective.
But since $D(A)$ is injective, this short exact sequence splits and thus $D(A)$ is a direct summand of the projective module $P$. This shows that every injective module is projective and thus $A$ is selfinjective.
\end{proof}
The previous proposition applies especially to all QF-3 algebras:
\begin{corollary}
A QF-3 algebra is selfinjective in case every simple module is reflexive.
\end{corollary}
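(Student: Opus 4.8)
The plan is to observe that for a QF-3 algebra the hypothesis of the preceding proposition is automatically fulfilled, so that the corollary is an immediate specialization. Recall from the preliminaries that $A$ is QF-3 precisely when the injective envelope $I(A)$ of the regular module is projective. In particular $I(A)$ then has projective dimension $0$, which is certainly at most one. Hence, under the additional assumption that every simple $A$-module is reflexive, the hypotheses of the previous proposition hold verbatim, and we conclude that $A$ is selfinjective.

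In more detail, I would first invoke the definition of QF-3 to record that $I(A)$ has projective dimension at most one. Then I would simply apply the previous proposition: since every simple module is reflexive it is a $2$-syzygy module by \ref{2syzy}, hence in particular a $1$-syzygy module, so $\Omega^1(mod-A)$ contains all simple modules; by \ref{AusReiresult} this subcategory is extension-closed, and since every finitely generated module is built from simple modules by extensions we get $\Omega^1(mod-A)=mod-A$; applying this to the injective module $D(A)$ gives a short exact sequence $0 \rightarrow D(A) \rightarrow P \rightarrow M \rightarrow 0$ with $P$ projective, which splits because $D(A)$ is injective, so $D(A)$ is a summand of a projective module. Thus every injective module is projective and $A$ is selfinjective.

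There is essentially no obstacle to overcome here: the content of the corollary is entirely contained in the proposition, and the only thing to check is the trivial implication that a projective module has projective dimension at most one. If one wishes, one may phrase the argument so as to emphasize that the same reasoning applies to any Artin algebra for which $I(A)$ is projective, the terminology ``QF-3'' being merely a name for this condition.
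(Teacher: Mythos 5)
Your proposal is correct and matches the paper exactly: the corollary is obtained by noting that QF-3 means $I(A)$ is projective, hence of projective dimension $0 \leq 1$, so the preceding proposition applies verbatim. The paper gives no separate proof beyond this observation, so your argument is the intended one.
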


\begin{proposition}
Let $A$ be a QF-2 algebra such that each simple $A$-module is reflexive. Then $A$ is selfinjective.
\end{proposition}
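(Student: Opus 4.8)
The plan is to deduce from the hypotheses that the socle of the regular module is multiplicity free, both as a right and as a left module, and then to compare the injective envelope of $A$ on the two sides.

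I would first argue on the right. If a simple module $S$ is reflexive then $\Hom_A(S,A)\neq 0$ (otherwise $S\cong S^{**}=0$), so $S$ embeds into $A_A$ and is therefore a direct summand of $\soc(A_A)$; thus every simple module occurs in $\soc(A_A)$. On the other hand, since $A$ is basic and QF-2, $\soc(A_A)=\bigoplus_{i=1}^{n}\soc(e_iA)$ is a direct sum of $n$ simple modules, where $n$ is the number of simple $A$-modules. Comparing, each simple occurs exactly once, so that $\soc(A_A)\cong A/J$ and $i\mapsto\soc(e_iA)$ is a bijection onto the simple modules. As $\soc(A_A)$ is essential in $A_A$ and $A_A$ is essential in its injective envelope, this gives $I(A_A)=I(\soc(A_A))=\bigoplus_{j=1}^{n}I(S_j)=D({}_AA)$, the minimal injective cogenerator of $\mod\text{-}A$; in particular $A_A$ is an essential submodule of $D({}_AA)$.

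Next I would carry out the mirror argument on the left. Here reflexivity enters once more: because $\soc(A_A)$ is multiplicity free, $S^{*}:=\Hom_A(S,A)=\Hom_A(S,\soc(A_A))$ is a simple left module for each simple $S$, and $S\mapsto S^{*}$ is injective by reflexivity, hence a bijection from the simple right to the simple left modules, under which $\Hom_{A^{op}}(S^{*},A)=S^{**}\cong S$. A short multiplicity computation based on this last isomorphism then shows that every simple left module occurs in $\soc({}_AA)$ with multiplicity one, i.e.\ $\soc({}_AA)\cong A/J$ as a left module (equivalently, $A^{op}$ is again QF-2), and repeating the first step for $A^{op}$ yields $I({}_AA)=D(A_A)$, the minimal injective cogenerator of $\mod\text{-}A^{op}$; so ${}_AA$ is an essential submodule of $D(A_A)$. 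I expect this step to be the main obstacle: it is the only point where one has to look at the module structure of $\Hom_A(S,A)$ rather than argue formally, and it cannot be omitted, since for a general QF-2 algebra the lengths of $A_A$ and ${}_AA$ may differ and then the comparison below breaks down.

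Finally, applying the duality $D$ to the essential embedding $A_A\hookrightarrow D({}_AA)$ produces a surjection ${}_AA=D(D({}_AA))\twoheadrightarrow D(A_A)$. Thus the finite length module ${}_AA$ both embeds into and surjects onto $D(A_A)$, which forces both maps to be isomorphisms; hence ${}_AA\cong D(A_A)$ is injective, i.e.\ $A$ is selfinjective.
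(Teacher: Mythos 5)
Your overall route is genuinely different from the paper's, and its final synthesis is a nice touch: the paper runs only your first (right-hand) step, obtaining $A_A\hookrightarrow I(A_A)=D({}_AA)$, and then finishes by the length count $\ell(A_A)=\ell(D({}_AA))=\ell({}_AA)$, whereas your injection-plus-surjection argument replaces that count entirely. Your worry about it is legitimate for non-split Artin algebras, where $\ell(A_A)=\ell({}_AA)$ can indeed fail (e.g. $A=\left(\begin{smallmatrix}K&L\\0&L\end{smallmatrix}\right)$ for a proper field extension $L/K$), so the extra work buys real generality. Your first and third steps are correct as written.

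The gap sits exactly where you predicted it, and what you say does not close it. First, the simplicity of $S^{*}$ does not follow from the multiplicity-freeness of $\soc(A_A)$ alone: writing $T_i$ for the unique copy of $S_i$ in $\soc(A_A)$, one gets $\Hom_A(S_i,A)\cong\Hom_A(S_i,T_i)\cong\End_A(S_i)$ as abelian groups, and this is a semisimple, isotypic left module (one checks $JT_i=0$ because $T_i$ is a sub-bimodule that is simple on the right), but nothing forces a left module with underlying group $\End_A(S_i)$ to have left length one when the division rings $\End_A(S_j)$ are larger than the base field and differ from each other. What rescues the claim is reflexivity, which you do not invoke at this point: if $S_i^{*}\cong U^{m}$ with $U$ simple and $m\ge 2$, then $S_i\cong S_i^{**}\cong (U^{*})^{m}$ is either zero or decomposable, contradicting simplicity of $S_i$; hence $m=1$. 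Second, the ``short multiplicity computation'' yielding $\soc({}_AA)\cong A/J$ is the crux and is not supplied. Your argument shows that every simple left module occurs at least once in $\soc({}_AA)$; to cap each multiplicity at one you need $\soc({}_AA)$ to have exactly $n$ simple summands, i.e. each $\soc(Ae_j)$ to be simple. This is immediate if QF-2 is taken as a two-sided condition (as in Anderson--Fuller, the reference the paper cites), but it does not follow from the right-hand condition alone, and nothing in your sketch derives it. With these two repairs --- invoke reflexivity to get simplicity of $S^{*}$, and use the left half of the QF-2 hypothesis to count the left socle --- your proof closes.
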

\begin{proof}
Since every simple module $S$ is reflexive, we have $Hom_A(S,A) \neq 0$. This shows that the socle of $A$ contains for each indecomposable projective module $P$ exactly one copy of the corresponding simple module $top(P)$ since $A$ is QF-2. This means that the injective hull of $A$ is equal to $D(A)$ and thus $A$ embedds into $D(A)$. But $A$ and $D(A)$ have the same length and thus this embedding has to be an isomorphism, showing $A \cong D(A)$ and thus $A$ is selfinjective.
\end{proof}

Now we look at the problem for local algebras. Note that a local algebra is selfinjective if and only if its socle is simple. A local algebra has a unique simple modules $S$.
We prove it first for commutative algebras, where one even can calculate $S^{**}$ explicitly:
\begin{proposition} \label{commsimple}
Let $A$ be a commutative algebra that is not selfinjective. Let $soc(A)=nS$ for some $n \geq 2$. Then $S^{**} \cong n^2 S$.
\end{proposition}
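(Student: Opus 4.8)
The plan is to compute $S^{*}$ and then $S^{**}$ by hand, exploiting that $A$ is local and commutative so that $A\cong A^{op}$ and every functor in sight just produces $A$-modules. Write $S\cong A/J$, where $J$ is the Jacobson radical of $A$. The first step is to identify $S^{*}=\Hom_A(A/J,A)$: an $A$-linear map $f\colon A/J\to A$ is completely determined by the element $a:=f(1+J)\in A$, and the requirement that $f$ be well defined is precisely that $aJ=0$. Hence $S^{*}\cong\{a\in A\mid aJ=0\}=\soc(A)$, and this identification is compatible with the module structures once one notes that the left action on $\Hom_A(A/J,A)$ corresponds to left multiplication on the annihilator of $J$; by commutativity this is the same as the right action. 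By hypothesis $\soc(A)\cong nS$, so $S^{*}\cong nS$.

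The second step is then immediate: since $(-)^{*}=\Hom_A(-,A)$ is an additive functor, $S^{**}=(S^{*})^{*}\cong (nS)^{*}\cong n\,(S^{*})\cong n\,(nS)=n^{2}S$, which is the asserted isomorphism. The hypotheses that $A$ is not selfinjective and $n\geq 2$ are only there to make the statement non-vacuous: a local algebra with simple socle is selfinjective, so $n\geq 2$ is forced, and the computation itself is valid for every $n$.

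I do not expect a serious obstacle here; the commutative local situation makes the argument essentially formal. The only point requiring a little care is the bookkeeping of left versus right module structures on $\Hom_A(-,A)$, which is exactly where commutativity is used: it lets us treat $S$, $S^{*}$ and $S^{**}$ uniformly as $A$-modules and lets us identify $\Hom_A(A/J,A)$ with $\soc(A)$ as a genuine $A$-module rather than merely as an $A^{op}$-module.
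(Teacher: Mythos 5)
Your proposal is correct and follows essentially the same route as the paper: identify $S^{*}=\Hom_A(A/J,A)$ with the annihilator of $J$, hence with $\soc(A)\cong nS$ by commutativity, and then apply additivity of $(-)^{*}$ to get $S^{**}\cong n^2S$. The only cosmetic difference is that you phrase the identification via $f\mapsto f(1+J)$ while the paper phrases it via lifting to endomorphisms of $A$, and you are slightly more explicit about the left/right bookkeeping.
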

\begin{proof}
Note that we can assume that our algebras are local and thus $S \cong A/J$.
We first calculate
$S^{*}=Hom_A(A/J,A)$. Now note that the maps in $Hom_A(A/J,A)$ correspond to the maps $f$ in $Hom_A(A,A)$ such that $f(J)=0$. Since the maps in $Hom_A(A,A)$ are left multiplications by an element $a \in A$, $Hom_A(A/J,A)$ is isomorphic to the left annihilator of the module $J$. Now since $A$ is commutative, the left annihilator of $J$ is isomorphic to the socle of $A$, see for example \cite{L2} lemma 8.3. of chapter 1. Thus $S^{*} \cong nS$ and applying $(-)^{*}$ to this isomorphism again, we obtain $S^{**} \cong n S^{*} \cong n^2 S$.
\end{proof}
\begin{corollary}
A commutative algebra $A$ is selfinjective iff every simple module is reflexive.
\end{corollary}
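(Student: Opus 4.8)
The plan is to reduce at once to the local case and then read off the result from Proposition \ref{commsimple}. The implication ``$A$ selfinjective $\Rightarrow$ every simple module reflexive'' is the classical fact recalled in the introduction (every finitely generated module over a selfinjective algebra is reflexive), so all the content lies in the converse.

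For the converse I would first decompose $A$, as a commutative Artin algebra, into a finite product $A \cong A_1 \times \cdots \times A_k$ of local commutative Artin algebras. The category $mod-A$ splits correspondingly as the product of the categories $mod-A_i$, and for a module $M$ supported on the $i$-th factor one has $Hom_A(M,A) \cong Hom_{A_i}(M,A_i)$, because the idempotents belonging to the other factors annihilate $M$. Hence the evaluation map $f_M$ computed over $A$ is identified with the one computed over $A_i$. It follows that every simple $A$-module is reflexive if and only if, for each $i$, the unique simple $A_i$-module is reflexive over $A_i$, and that $A$ is selfinjective if and only if every $A_i$ is selfinjective. This reduces the statement to the case where $A$ is local and commutative.

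So let $A$ now be local and commutative with unique simple module $S \cong A/J$, and suppose $A$ is not selfinjective. Since a local algebra is selfinjective precisely when its socle is simple, we have $soc(A) = nS$ with $n \geq 2$. Proposition \ref{commsimple} then gives $S^{**} \cong n^2 S$, a module of length $n^2 \geq 4$, which therefore cannot be isomorphic to $S$. By the lemma characterising reflexive modules as exactly those $M$ with $M \cong M^{**}$, the module $S$ is not reflexive. Taking the contrapositive, if every simple $A$-module is reflexive then $A$ is selfinjective, which completes the proof.

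I do not anticipate any genuine obstacle: once Proposition \ref{commsimple} is available the argument is essentially bookkeeping, the only mildly delicate point being to check that both properties ``every simple module is reflexive'' and ``selfinjective'' can be verified one local factor at a time in the decomposition of a commutative Artin algebra.
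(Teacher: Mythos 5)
Your proposal is correct and follows essentially the same route as the paper: reduce to the local commutative case, then use Proposition \ref{commsimple} to see that a non-selfinjective local algebra has $S^{**} \cong n^2 S$ with $n \geq 2$, so $S$ cannot be reflexive. You simply spell out the reduction to local factors that the paper compresses into ``we can assume that $A$ is local,'' which is a harmless (and welcome) elaboration.
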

\begin{proof}
We can assume that $A$ is local. $A$ is selfinjective iff $n=1$ in the equation $soc(A)=nS$, which is equivalent to $S^{**} \cong n^2 S$ by the previous proposition.
\end{proof}
\ref{commsimple} motivates the question whether one can in general describe $S^{**}$ in a local algebra.

We give an example that shows that $S^{**}$ does not contain $S$ as a direct summand in general for local algebras. This example was found with the GAP-package QPA, see \cite{QPA}.
\begin{example}
Let $K<x,y>$ denote the non-commutative polynomial ring in $x$ and $y$ and \newline $A:=K<x,y>/(x^3,y^2,xyx)$. Then $A$ has dimension 10 and $S^{**} \cong xA \oplus xA$, where $xA$ is an indecomposable 4-dimensional module.
\end{example}
Now we look at general local algebras. We denote by $soc_l(A)$ the socle of the left regular module.
\begin{proposition}
\begin{enumerate}
\item Let $A$ be an Artin algebra with Jacobson radical $J$ such that every for every simple left module $U$ we have that $U^2$ is a direct summand of the left socle $soc_l(A)$ of $A$ and dually for every simple right module $S$ we have that $S^2$ is a direct summand of the right socle $soc(A)$ of $A$. In case all simple modules are reflexive, $A$ is selfinjective.
\item When the unique simple module in a local algebra is reflexive, the algebra is selfinjective.
\end{enumerate}
\end{proposition}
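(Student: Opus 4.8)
The plan is to prove the local statement (2) first, as it is the cleanest, and then to bootstrap to (1). For (2), let $A$ be local with unique simple right module $S=A/J$ and simple left module $U={}_A\!A/J$, and suppose $S$ is reflexive. A homomorphism $A/J\to A$ of right modules is $\bar x\mapsto ax$ for some $a$ with $aJ=0$, so $S^{*}=\Hom_A(A/J,A)\neq 0$; hence the finitely generated left module $S^{*}$ has a simple quotient, necessarily $U$, and we fix a surjection $S^{*}\twoheadrightarrow U$. Applying $(-)^{*}=\Hom_A(-,A)$ gives an injection $\Hom_A(U,A)\hookrightarrow (S^{*})^{*}=S^{**}\cong S$. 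Since every homomorphism $U\to {}_A\!A$ has image a simple submodule of ${}_A\!A$, we have $\Hom_A(U,A)=\Hom_A(U,\soc_l(A))$; writing $\soc_l(A)\cong U^{m}$ (with $m\geq 1$, as $A$ is local), this equals $\End_A(U)^{m}$, of length $m\cdot\ell(U)$. As it is nonzero and embeds in the simple module $S$, we get $m\cdot\ell(U)\leq\ell(S)=\ell(U)$, so $m=1$: the left regular module has simple socle. Since ${}_A\!A$ is indecomposable and, over a local algebra, its injective envelope is the unique indecomposable injective left module $D(A)$ — of the same length as ${}_A\!A$ — the inclusion ${}_A\!A\hookrightarrow D(A)$ is an isomorphism, so $A$ is selfinjective.

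For (1), I would run the same mechanism over all primitive idempotents, keeping track of lengths. Fix primitive idempotents $e_1,\dots,e_n$ and write $S_i=e_iA/e_iJ$, $U_i=Ae_i/Je_i$ for the simple right and left modules; since $A$ is basic, $S_i$ and $U_i$ have the same length (both being, as modules over the ground ring, isomorphic to $e_iAe_i/e_iJe_i$). The key claim is: \emph{if $S_i$ is reflexive and $U_j$ is a simple quotient of the left module $S_i^{*}$, then $\ell(S_i)=c_j\cdot\ell(S_j)$, where $c_j$ denotes the multiplicity of $U_j$ as a summand of the semisimple left module $\soc_l(A)$.} Its proof mirrors (2): dualizing a surjection $S_i^{*}\twoheadrightarrow U_j$ gives an embedding $\Hom_A(U_j,A)\hookrightarrow S_i^{**}\cong S_i$; on the other hand every homomorphism $U_j\to {}_A\!A$ has image in $\soc_l(A)$, so $\Hom_A(U_j,A)=\Hom_A(U_j,\soc_l(A))\cong\End_A(U_j)^{c_j}$ as right $A$-modules, which is nonzero (as $c_j\geq 1$) and of length $c_j\cdot\ell(U_j)=c_j\cdot\ell(S_j)$; combined with the embedding into the simple module $S_i$ this forces $\Hom_A(U_j,A)\cong S_i$, whence $\ell(S_i)=c_j\cdot\ell(S_j)$.

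To conclude (1), assume every simple module is reflexive; by the hypothesis of (1) we then have $c_k\geq 2$ for every $k$. Choosing for each $i$ a simple quotient $U_{\phi(i)}$ of the (nonzero) left module $S_i^{*}$, the claim gives $\ell(S_i)=c_{\phi(i)}\cdot\ell(S_{\phi(i)})\geq 2\,\ell(S_{\phi(i)})$; iterating, $\ell(S_{i_0})\geq 2^{m}\,\ell(S_{\phi^{m}(i_0)})\geq 2^{m}$ for every $m\geq 1$, which is absurd. Hence, under the hypotheses of (1), not every simple module can be reflexive, and the stated implication holds vacuously. I expect the main obstacle to be the bookkeeping in the key claim — the identifications $\Hom_A(U_j,A)=\Hom_A(U_j,\soc_l(A))\cong\End_A(U_j)^{c_j}$ as right $A$-modules together with the resulting length equality; once these are in place, the rest is a routine contravariant-$\Hom$ computation plus the elementary iteration above.
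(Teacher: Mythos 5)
Your argument is correct, and its engine is the same as the paper's: dualize a surjection from $S^{*}$ onto a simple left module $U$ to embed $\Hom_A(U,A)=\Hom_A(U,\soc_l(A))$ into $S^{**}\cong S$, and use the multiplicity hypothesis on $\soc_l(A)$ to show this Hom-space is too large to sit inside a simple module. The paper does this in one stroke for (1) --- two independent maps from $\top(S^{*})$ into the summand $U^2$ of $\soc_l(A)$ show $S^{**}$ ``has length at least two, hence is not simple'' --- and then obtains (2) as a corollary, since a non-selfinjective local algebra automatically satisfies the socle hypotheses on both sides. You refine this in two ways. First, your key claim records the exact relation $\ell(S_i)=c_j\,\ell(S_j)$ (using that for a basic algebra $S_j$, $U_j$ and $\End_A(U_j)$ all have the same length over the ground ring) and derives the contradiction by iterating $\ell(S_i)\geq 2\,\ell(S_{\phi(i)})$ through the finitely many simples; this is longer than the paper's one-step count but makes explicit a comparison the paper leaves implicit, namely that the ``length at least two'' of $\Hom_A(U,U^2)$ must be measured against the length of the possibly different simple module $S$. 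Second, you prove (2) directly and non-vacuously, showing $\soc_l(A)$ is simple and concluding from $\ell({}_A A)=\ell(D(A))$, rather than specializing (1). Two small points to tidy: the isomorphism $\Hom_A(U_j,\soc_l(A))\cong\End_A(U_j)^{c_j}$ holds over the ground ring (or over $\End_A(U_j)$), not canonically as right $A$-modules --- but only its length and the fact that $\Hom_A(U_j,A)$ is a nonzero right $A$-submodule of the simple $S_i^{**}$ are needed; and the key claim should carry the hypothesis $c_j\geq 1$ (else $\Hom_A(U_j,A)=0$), which is harmless since the hypotheses of (1) give $c_j\geq 2$ for every $j$.
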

\begin{proof}
\begin{enumerate}
\item Let $S$ be a simple reflexive module and $M:=Hom_A(S,A)$. Note that the left module $M$ is non-zero since $S$ is assumed to be reflexive. Let $U$ be simple summand of $top(M)$. Then there are at least two maps $M \rightarrow soc_l(A)$ given by mapping the top of $M$ into the direct summand of $U^2$ of $soc_l(A)$ since $U^2$ is a summand of $soc_l(A)$ by assumption. This shows that the length of $S^{**}=M^{*}=Hom_A(M,A)$ is at least two and thus can not be simple.
\item This follows immediately from (1) since a local algebra is selfinjective if and only if the socle of the regular module $A$ is simple.

\end{enumerate}
\end{proof}

As a final result we give a positive answer to the question for algebras $A$ with $A \cong A^{op}$.

\begin{proposition}
\begin{enumerate}
\item Let $A$ be an algebra such that every simple left and every simple right $A$-module is reflexive. Then $A$ is selfinjective.
\item Let $A$ be an algebra with $A \cong A^{op}$ and assume that every simple $A$-module is reflexive. Then $A$ is selfinjective.
\end{enumerate}
\end{proposition}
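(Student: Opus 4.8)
The plan is to reduce part (2) to part (1) and to prove part (1) by a socle criterion for selfinjectivity. For (2): an isomorphism $A\cong A^{op}$ identifies right $A$-modules with left $A$-modules compatibly with the functor $(-)^{*}$, so the hypothesis that every simple $A$-module be reflexive is exactly the hypothesis of (1); hence (2) follows immediately from (1), and all the work is in (1).

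For (1) I would first record the easy half of the following remark: $A$ is selfinjective if and only if $\soc(A_{A})\cong A/J$ as right $A$-modules. Indeed, if $\soc(A_{A})\cong A/J$, then $I(A_{A})=I(\soc(A_{A}))=\bigoplus_{i}I(S_{i})$ is the minimal injective cogenerator $D({}_{A}A)$ of $\mathrm{mod}\,A$, which has the same length as $A$; since $A_{A}$ is an essential submodule of $I(A_{A})$ and the two have the same length, the inclusion $A_{A}\hookrightarrow I(A_{A})$ is an isomorphism, so $A_{A}$ is injective. (The converse direction is not needed.) So it suffices to show $\soc(A_{A})\cong A/J$.

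Now reflexivity enters. For every simple right module $S$ one has $S^{*}\neq 0$ (else $S\cong S^{**}=0$), and likewise $U^{*}\neq 0$ for every simple left module $U$; in particular $\soc(A_{A})$ contains every simple right module at least once, and $\soc({}_{A}A)$ contains every simple left module. The crucial point is that $S^{*}$ is even \emph{simple}: choose a simple quotient $U$ of the nonzero left module $S^{*}$ and apply the left-exact contravariant functor $(-)^{*}$ to the epimorphism $S^{*}\twoheadrightarrow U$; this produces a monomorphism $U^{*}\hookrightarrow S^{**}\cong S$, so $U^{*}\cong S$, since $U^{*}\neq 0$ and $S$ is simple. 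Applying $(-)^{*}$ once more and using that $U$ is reflexive yields $S^{*}\cong U^{**}\cong U$. Thus $S\mapsto S^{*}$ carries simple right modules to simple left modules; the symmetric argument shows $U\mapsto U^{*}$ carries simple left modules to simple right modules, and since both composites return a reflexive module to itself, these two assignments are mutually inverse. Hence $S\mapsto S^{*}$ is a bijection between the isomorphism classes of the simple right and of the simple left $A$-modules.

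Finally I would compute $\soc(A_{A})$. Since every homomorphism out of the semisimple module $A/J$ has semisimple, hence socle-contained, image, $\Hom_{A}(A/J,A)=\Hom_{A}(A/J,\soc(A_{A}))=\bigoplus_{[S]}S^{*}$, the sum over the isomorphism classes of simple right modules; and $\Hom_{A}(A/J,A)$ has the same length as $\soc(A_{A})=\{a\in A:aJ=0\}$. By the bijection above, $\bigoplus_{[S]}S^{*}$ is the direct sum of each simple left module exactly once, i.e.\ $\cong{}_{A}(A/J)$, so $\soc(A_{A})$ has the same length as $A/J$. As $\soc(A_{A})$ already contains $A/J$ as a right submodule, equality of lengths forces $\soc(A_{A})\cong A/J$, and the socle criterion gives that $A$ is selfinjective. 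The step I expect to demand the most care is making the identifications $\Hom_{A}(A/J,A)\cong\bigoplus_{[S]}S^{*}$ and $\bigoplus_{[S]}S^{*}\cong{}_{A}(A/J)$ fully precise over a general Artin algebra — in particular keeping track of the endomorphism division rings of the simple modules so that \emph{lengths}, not merely numbers of indecomposable summands, match on the two sides — together with verifying that $S\mapsto S^{*}$ really is a bijection; this is exactly where reflexivity of the simple modules on \emph{both} sides is used essentially and symmetrically, and the remaining ingredients are standard.
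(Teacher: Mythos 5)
Your proof is correct, and while it shares the paper's overall skeleton --- show that $\Hom_A(S,A)$ is a \emph{simple} left module for every simple right module $S$, deduce $\soc(A)\cong A/J$, and conclude selfinjectivity --- the way you establish the central simplicity claim is genuinely different. The paper argues by cases on whether $M=\Hom_A(S,A)$ is semisimple: if not, it produces two homomorphisms $M\rightarrow A$, one with image inside and one with image outside the left socle, to force $S^{**}=M^{*}$ to have length at least two; if $M$ is semisimple but not simple, it argues that $M^{*}$ would be decomposable. Your argument replaces this case analysis with a single duality computation: a simple quotient $U$ of $S^{*}$ dualizes, by left-exactness of $(-)^{*}$, to a nonzero submodule of $S^{**}\cong S$, whence $U^{*}\cong S$ and $S^{*}\cong U^{**}\cong U$. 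This is cleaner, uses the reflexivity of the simple modules on both sides in a visibly symmetric way, and gives as a byproduct the mutually inverse bijection $S\mapsto S^{*}$ between isomorphism classes of simple right and simple left modules, which the paper never makes explicit. You are also more careful than the paper at the final step: the passage from ``$S^{*}$ is simple for all $S$'' to ``each simple occurs exactly once in $\soc(A)$'' is asserted rather quickly in the paper (it is not immediate that $\Hom_A(S,S^{m})$ being a simple left $A$-module forces $m=1$, since the individual $\Hom_A(S,S_i)$ need not be left submodules), whereas your length count via $\soc(A_A)\cong\Hom_A(A/J,A)\cong{}_A(A/J)$, measured over the base ring, closes exactly this gap. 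The one point to make fully explicit in a final write-up is the one you already flag: all length comparisons between left and right modules should be taken over the base commutative artinian ring (or as $k$-dimensions), where they are unambiguous because $A/J$ and $\soc(A_A)$ are the same underlying $R$-modules on both sides.
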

\begin{proof}
\begin{enumerate}
\item Let $S$ be a simple A-module and $M:=Hom_A(S,A)$. 
We have two cases: \newline
\underline{Case 1}: $M$ is not semisimple. Then $M$ is a torsionless left $A$-module and thus there is an embedding $M \rightarrow A^n$ for some $n$. Then there is a map $M \rightarrow A^n \rightarrow A$ where the first map is the inclusion of $M$ into $A^n$ and the second map a projection, whose image is not included in the left socle of $A$.
On the other hand there is a map $M \rightarrow A$ whose image is included in the left socle of $A$ (by mapping the top of $M$ into some simple socle summand).
Thus in this case $S$ is not isomorphic to $S^{**}$ because $S^{**}=Hom_A(M,A)$ has length at least two. This gives a contradiction. \newline
\underline{Case 2}: $M$ is semisimple. Then $M$ has to be simple in fact or else $S^{**}=M^{*}$ is not indecomposable (here we use that all simple left modules are reflexive and thus $U^{*}$ is non-zero for any simple left $A$-module $U$).
This implies that $Hom_A(S,A)$ is simple for all $S$ and thus every simple module is exactly once a direct summand of $soc(A)$. This implies that $A/J=soc(A)$, which is equivalent to $A$ being selfinjective.

\item This follows immediately from (1).

\end{enumerate}

\end{proof}


\begin{thebibliography}{Gus}
\bibitem[AnFul]{AnFul} Anderson, F.; Fuller, K.: {\it Rings and Categories of Modules.} Graduate Texts in Mathematics, Volume 13, Springer-Verlag, 1992. 
\bibitem[AR]{AR} Auslander, M.; Reiten, I.: {\it Syzygy Modules for Noetherian Rings.} Journal of Algebra, Volume 183, Issue 1, 1 July 1996, Pages 167-185.
\bibitem[ARS]{ARS} Auslander, M.; Reiten, I.; Smalo, S.: {\it Representation Theory of Artin Algebras.}
Cambridge Studies in Advanced Mathematics, Volume 36, Cambridge University Press, 1997.
\bibitem[Che]{Che} Chen, X.: {\it Gorenstein Homological Algebra of Artin
Algebras.} https://arxiv.org/abs/1712.04587.
\bibitem[GH]{GH} Gubareni, N.; Hazewinkel, M.: {\it Algebras, Rings and Modules, Volume 2: Non-commutative Algebras and Rings.} CRC Press, 2017.
\bibitem[L]{L} Lam, T. Y. : {\it Lectures on modules and rings.} Graduate Texts in Mathematics, Springer, 1998.
\bibitem[L2]{L2} Landrock, P.: {\it Finite group algebras and their modules.} London Mathematical Socity Lecture Note Series 84, 1983.
\bibitem[QPA]{QPA} The QPA-team, QPA - Quivers, path algebras and representations - a GAP package, Version 1.25; 2016 (https://folk.ntnu.no/oyvinso/QPA/)

\end{thebibliography}
\end{document}